\newtheorem{theorem}{Theorem}[section]
\newtheorem{thmx}{Theorem}
\newtheorem{proposition}[theorem]{Proposition}
\newtheorem{lemma}[theorem]{Lemma}
\newtheorem{corollary}[theorem]{Corollary}
\theoremstyle{definition}
\newtheorem{definition}[theorem]{Definition}
\theoremstyle{remark}
\newtheorem{example}[theorem]{Example}				
\newtheorem{remark}[theorem]{Remark}
\renewcommand{\Im}{\operatorname{Im}}
\newcommand{\Aut}{\operatorname{Aut}}
\newcommand{\Cay}{{\rm Cay}}
\newcommand{\supp}{\operatorname{supp}}
\newcommand{\Diag}{\operatorname{Diag}}
\newcommand{\Adj}{{\rm A}}
\newcommand{\A}{\mathcal{A}}
   \def\MR#1{}
\begin{document}

\title{Automorphism groups of Cayley evolution algebras}
\author{C. Costoya}
\address{CITIC, CITMAGA, Departamento de Ciencias de la Computaci{\'o}n y Tecnolog{\'\i}as de la Informaci{\'o}n,
Universidade da Coru{\~n}a, 15071-A Coru{\~n}a, Spain.}
\email{cristina.costoya@udc.es}

\author{V. Mu{\~n}oz}
\address{Departamento de \'Algebra, Geometr{\'\i}a y Topolog{\'\i}a, Universidad de M{\'a}laga, 29071-M{\'a}laga, Spain}
\email{vicente.munoz@ucm.es}

\author{A. Tocino}
\address{Departamento de Matem\'atica Aplicada, Universidad de M{\'a}laga, 29071-M{\'a}laga, Spain}
\email{alicia.tocino@uma.es}

\author{A. Viruel}
\address{Departamento de \'Algebra, Geometr{\'\i}a y Topolog{\'\i}a, Universidad de M{\'a}laga, 29071-M{\'a}laga, Spain}
\email{viruel@uma.es}

\subjclass{05C25, 17A36, 17D99}
\keywords{Evolution algebra, finite group, automorphism group, graph}

\begin{abstract}
In this paper we introduce a new species of evolution algebras that we call Cayley evolution algebras.
We show that if  a field  $\Bbbk$ contains sufficiently many elements (for example if $\Bbbk $ is infinite) then every finite group 
$G$ is isomorphic to $\Aut (X)$ where $X$ is a finite-dimensional absolutely simple Cayley evolution $\Bbbk$-algebra. 
\end{abstract}

\maketitle

\section{Introduction}

The question of whether any finite group may be realised as the full automorphism group of a finite-dimensional simple algebra (over an algebraically closed field $\Bbbk$ of characteristic zero) was raised by Popov in \cite{Popov}. Years later,  in the celebrated paper  by Gordeev and Popov \cite{PopovAnnals},  a positive answer was given  in a more general setting:  if $\Bbbk$ is a field containing sufficiently many elements, every linear algebraic $\Bbbk$-group is isomorphic to the full automorphism group of a finite-dimensional simple $\Bbbk$-algebra (which is neither associative nor commutative).  

Using techniques from Rational Homotopy Theory,  in   \cite[Proposition 4.2, Corollary 4.11(2)]{cmv3} it was shown that every group $G$ is the full automorphism group of an infinite number of non-isomorphic differential graded $\mathbb Q$-algebras $M_n,\; n \geq 1$ (that are associative and commutative in the graded sense). Moreover, if the group $G$ is finite, then for every integer $n \geq 1$, $M_n$ is an elliptic algebra, which means that it is finitely generated with finite-dimensional cohomology over $\mathbb Q.$  

In this paper, we place ourselves in the framework of evolution algebras, which are commutative and non-associative algebras. 
Recall that a  $\Bbbk$-algebra $X$ is called absolutely simple, if for every field extension $\mathbb{F}/\Bbbk$, the  $\mathbb{F}$-algebra $X_{\mathbb F}=X\otimes_\Bbbk {\mathbb F}$ is simple. Then our main result is:

\begin{thmx}\label{thm:main}
	Let $G$ be a finite group and let $\Bbbk$ be a not necessarily finite field with multiplicative group of order $|\Bbbk^*| \geq 2|G|$. Then,  there exists an absolutely simple evolution $\Bbbk$-algebra $X$ such that $\Aut(X_{\mathbb F})\cong G$ for every  field extension $\mathbb{F}/\Bbbk$.
\end{thmx}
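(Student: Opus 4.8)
The plan is to realise $G$ as the automorphism group of a Cayley evolution algebra built directly on the group, extracting both rigidity and absolute simplicity from a single careful choice of structure constants. Concretely, I would take the $\Bbbk$-algebra $X$ with natural basis $\{e_g\}_{g\in G}$, multiplication $e_g\cdot e_{g'}=0$ for $g\neq g'$, and
\[
 e_g\cdot e_g=\sum_{x\in G}\lambda_x\,e_{gx},
\]
where the $\lambda_x\in\Bbbk^*$ are to be fixed below; its structure digraph is the complete loop-carrying Cayley digraph of $G$, the arc $g\to gx$ carrying weight $\lambda_x$. Left translations $L_h\colon e_g\mapsto e_{hg}$ are automorphisms, since $L_h(e_g\cdot e_g)=\sum_x\lambda_x e_{hgx}=e_{hg}\cdot e_{hg}$, so $h\mapsto L_h$ embeds $G\hookrightarrow\Aut(X_{\mathbb F})$ over every extension $\mathbb F/\Bbbk$.

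For the reverse inclusion I would invoke the structural description of automorphisms of simple evolution algebras established earlier: relative to the (essentially unique) natural basis every $\phi\in\Aut(X_{\mathbb F})$ is monomial, $\phi(e_g)=\mu_g e_{\sigma(g)}$ with $\mu_g\in\mathbb F^*$ and $\sigma\in\Perm(G)$. Imposing $\phi(e_g\cdot e_g)=\phi(e_g)\cdot\phi(e_g)$ and comparing coefficients gives, for all $g,k$, the relation $\lambda_{g^{-1}k}\,\mu_k=\mu_g^2\,\lambda_{\sigma(g)^{-1}\sigma(k)}$. Taking $k=g$ uses the loop term $\lambda_e$ and forces $\mu_g=\mu_g^2$, hence $\mu_g=1$ for all $g$; the relation then reduces to $\lambda_{g^{-1}k}=\lambda_{\sigma(g)^{-1}\sigma(k)}$. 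Provided the $\lambda_x$ are pairwise distinct, so that $x\mapsto\lambda_x$ is injective, this yields $\sigma(g)^{-1}\sigma(k)=g^{-1}k$, whence $\sigma(gx)=\sigma(g)x$ and $\sigma=L_{\sigma(e)}$. Thus $\phi=L_{\sigma(e)}\in G$, and since no step depends on the field, $\Aut(X_{\mathbb F})\cong G$ for every $\mathbb F/\Bbbk$.

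It remains to secure absolute simplicity together with the two standing requirements on the scalars (pairwise distinct, and invertible structure matrix). As $S=G$ generates $G$, the structure digraph is strongly connected, so by the simplicity criterion for evolution algebras it suffices that the structure matrix $C=\sum_{x\in G}\lambda_x P_x$, with $P_x$ the permutation matrix of right translation by $x$, be invertible; since $\det C\in\Bbbk$, its nonvanishing over $\Bbbk$ is inherited by every extension, yielding absolute simplicity at one stroke. The crux, and the place where $|\Bbbk^*|\ge 2|G|$ is spent, is producing scalars in $\Bbbk^*$ that are simultaneously pairwise distinct and make $\det C\neq0$. I would treat this by a Combinatorial Nullstellensatz argument applied, after fixing an ordering of $G$, to the polynomial $Q=\det\bigl(\sum_{x}\lambda_x P_x\bigr)\cdot\prod_{x<y}(\lambda_x-\lambda_y)$ in the variables $(\lambda_x)$: the determinant is homogeneous of degree $|G|$ and not identically zero (specialise $\lambda_e=1$, the rest to $0$, giving $\det=1$), while the Vandermonde factor enforcing distinctness contributes degree $|G|-1$ in each variable, so each variable occurs in $Q$ to degree at most $2|G|-1$. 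Exhibiting one top monomial of $Q$ with nonzero coefficient then lets Alon's theorem locate a point of $(\Bbbk^*)^{G}$ with $Q\neq0$ as soon as $|\Bbbk^*|\ge 2|G|$, which is exactly the stated bound.

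The main obstacle I anticipate is this last scalar selection: verifying that a suitable top-degree monomial of $Q$ has nonzero coefficient, so that the Combinatorial Nullstellensatz applies with the sharp value-set size $2|G|$, and confirming that the simplicity criterion is indeed phrased so that strong connectivity of the structure digraph together with invertibility of $C$ forces simplicity over $\Bbbk$ and hence, by base change, over every $\mathbb F$. Once the monomial form of automorphisms and the nonsingular, distinct-weight choice are in hand, the automorphism computation and the passage to arbitrary field extensions are routine.
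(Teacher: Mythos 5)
Your proposal is correct, and at the architectural level it rebuilds exactly the paper's object: your algebra is the Cayley evolution algebra $\Cay(f)$ of $\A=\Bbbk[G]$ with $\supp(f)=S=G$, left translations give $G\leq\Aut$, simplicity comes from regularity plus irreducibility of the structure matrix (the Cabrera--Siles--Velasco criterion, which your complete-with-loops digraph satisfies trivially), and base change is handled by noting that non-vanishing of the determinant and the support pattern are insensitive to field extension. Where you genuinely diverge is in the two technical pillars. For rigidity, the paper feeds the monomial form of automorphisms (Elduque--Labra's exact sequence $1\to\Diag\to\Aut(\Cay(f))\to\Aut(\Cay(G,S))$) into a balance computation that kills the diagonal part (using two coprime-order elements of $S$, e.g.\ $1\in S$) and then quotes the classical theorem that the edge-coloured Cayley graph of a generating set has automorphism group exactly $G$; you use the same monomial-form input but then do everything by hand --- the loop coefficient $\lambda_e\neq 0$ forces $\mu_g^2=\mu_g$, and injectivity of $x\mapsto\lambda_x$ forces $\sigma$ to be a left translation --- which replaces both the balance machinery and the coloured-Cayley-graph theorem at the cost of requiring full support $S=G$ (the paper's version keeps the flexibility of smaller generating sets). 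For the scalar selection, the paper proves an elementary one-variable-at-a-time lemma whose applicability rests on an explicit computation, via Harary's determinant formula for digraphs, that each pure power $X_k^{|G|}$ occurs in the circulant determinant with coefficient $\pm 1$; your Combinatorial Nullstellensatz argument on $Q=\det\bigl(\sum_x\lambda_xP_x\bigr)\cdot\prod_{x<y}(\lambda_x-\lambda_y)$ bypasses that computation entirely. Moreover, the obstacle you flag dissolves: $Q$ is a product of two nonzero homogeneous polynomials, hence nonzero and homogeneous, so \emph{every} monomial occurring in it already has total degree $\deg Q$, and each variable occurs in any such monomial to degree at most $|G|+(|G|-1)=2|G|-1$; Alon's theorem with value sets of size $2|G|$ inside $\Bbbk^*$ then applies and yields precisely the stated bound $|\Bbbk^*|\geq 2|G|$. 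So your route is a legitimate, arguably cleaner alternative in those two steps, while relying on the same two external results (Elduque--Labra for the monomial form, Cabrera--Siles--Velasco for simplicity) as the paper.
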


The proof of Theorem \ref{thm:main} is postponed to Section \ref{sec:cayleygroupalgebras} and relies on the \emph{good} properties of Cayley evolution algebras, which are new objects introduced in this work (see Definition \ref{def:cay}). They are constructed  out of any finite-dimensional associative $\Bbbk$-algebra $\mathcal A$, $\Bbbk$ an arbitrary field. In the special case that  $\mathcal A= \Bbbk[G]$,  a group algebra with $G$ finite group,  the associated Cayley evolution algebras will be closely related to classical Cayley graphs.

We end this section by pointing out that our main result is an improvement of our previous result \cite[Theorem 1.1]{CLTV} where finite groups were realised by regular, but not simple (see Remark \ref{notsimple}), evolution algebras (see also \cite{chinos} for an independent proof in char $\Bbbk =0$).


\section{Background on evolution algebras}\label{sec:evolution}

We present here the definitions on evolution algebras that are needed in the following sections. We mainly follow the notation in \cite{Tesis-Yolanda}, although the reader is encouraged to also check \cite{Tian}, a classical reference on the subject. Other properties of evolution algebras appear in \cite{extra3,extra1,extra2}.

Let $\Bbbk$ be a field. 
An automorphism of $\Bbbk$-algebra is a linear isomorphism which commutes with the multiplication
of the algebra. In this work we consider only finite-dimensional algebras.
We now recall the definition of an evolution algebra \cite[Definition 1]{Tian}, \cite[Definitions 1.2.1]{Tesis-Yolanda}:

\begin{definition}\label{def:evolution}
Let 
$X$ be a $\Bbbk$-algebra. We say that $X$ is an \textit{evolution $\Bbbk$-algebra} if  $X$ admits a  basis $B = \{b_i\, |\, i \in \Lambda\}$ such that $b_i b_j = 0$ for $i\neq j$. Such a basis $B$ is called a \textit{natural basis}.
\end{definition}

Observe that given an evolution $\Bbbk$-algebra $X$, it may admit more than one natural basis (see for example \ \cite[Example 1.6.3]{Tesis-Yolanda}). On the other hand, for a given $\Bbbk$-vector space $X$ spanned by a basis $B = \{b_i\, |\, i \in \Lambda\}$,  an evolution  $\Bbbk$-algebra with natural basis $B$ is completely determined by just giving $b_i^2:=b_i b_i\in X$ as  a linear combination of elements in $B$, for every $i\in\Lambda$, and declaring  $b_ib_j=0$ for $i \neq j$. This motivates the following definition \cite[Definition 1.2.1]{Tesis-Yolanda}:

\begin{definition}\label{def:structure}
	Let $X$ be an evolution algebra with a fixed natural basis $B$. The scalars $\omega_{ki} \in\Bbbk$ such that
	$b_i^2:=b_i b_i=\sum_{k\in\Lambda}\omega_{ki}b_k$ are called the \textit{structure constants} of $X$ relative to $B$, and the matrix $M_B(X) := (\omega_{ki})$ is said to be the \textit{structure matrix} of $X$ relative to $B$.
\end{definition}

\begin{definition}\label{def:regular}
	An evolution  $\Bbbk$-algebra $X$ is called \textit{regular} (or perfect, or idempotent) if  $X=X^2$.	An evolution algebra $X$ is called \textit{simple} if $X^2\neq 0$ and $0$ is the only proper ideal.
\end{definition}

	Notice  that an evolution algebra $X$ is regular if and only if for any natural basis $B$, the structure matrix $M_B(X)$ is a regular (or invertible) matrix. It is also clear that if $X$ simple then $X$ is regular, but the converse is not always true (see Proposition \ref{prop:Cay simple_algebra}).

Every evolution algebra has a directed graph and a weighted (or coloured) graph attached depending on the chosen natural basis \cite[Definition 15]{Tian}, \cite[Definition 2.2]{Elduque-Labra-2015}:

\begin{definition}\label{def:digraph}
Let $X$ be an evolution $\Bbbk$-algebra, and $B=\{b_1,\ldots,b_n\}$ its natural basis. 
The directed graph $\Gamma(X,B)=(V,E)$ with set of vertices $V=\{1,\ldots,n\}$ and set of edges $E=\{(i,j)\in V\times V: \omega_{ij}\ne 0\}$ is called the directed graph attached to the evolution algebra $X$ relative to $B$. The directed graph $\Gamma^w(X,B)=(V,E,w)$ with $\Gamma(X,B)=(V,E)$ and weight function $w\colon E\to \Bbbk$,  given by $w(i,j)=\omega_{ij}$, is called the weighted graph attached to the evolution algebra $X$ relative to $B$.
\end{definition}

\section{The Cayley evolution algebra of a $\Bbbk$-algebra}\label{sec:general}

We now explain how, to a given associative $\Bbbk$-algebra $\A$, we can associate an evolution $\Bbbk$-algebra that takes into account the inner product of $\A$:

\begin{definition}\label{def:cay}
Let $\A$ be a finite-dimensional associative $\Bbbk$-algebra with basis $B=\{b_i\,\colon\,i\in\Lambda\}$ together with a function $f:B\rightarrow\Bbbk$ 
(a set-theoretical map). We define the \textit{Cayley evolution  $\Bbbk$-algebra associated to $f$},  that we denote by $\Cay(f)$, as the evolution $\Bbbk$-algebra  with $\Cay(f)\cong \A$, as  a $\Bbbk$-vector space, furnished with natural basis $B$ and structure constants given by 
$b_i\cdot b_i=\sum_{b_j\in B}f(b_j)b_ib_j$, where  $x\cdot y$ denotes the product in $\Cay(f)$ and $xy$ the inner product in $\A$.
\end{definition}

\begin{remark}\label{rmk:cayley_con_vector}
Observe that fixing a function $f:B\rightarrow\Bbbk$ is equivalent to choosing a vector $w=\sum f(b_i)b_i$. Therefore,  the Cayley evolution algebra associated to $f$ could also be defined as the Cayley evolution algebra associated to a vector $w\in\A$ with $\Cay(w)\cong \A$, as $\Bbbk-$vector space, furnished with natural basis $B$ and structure constants given by $b_i\cdot b_i=\sum_{b_j\in B}w^*(b_j)b_ib_j$, where $w^*$ is the dual of $w$.
\end{remark}

\begin{remark}\label{rmk:diferentes_cayley}
Given a finite-dimensional associative $\Bbbk$-algebra $\A$ with basis $B$, the isomorphism type of the evolution $\Bbbk$-algebra $\Cay(f)$ strongly depends on the choice of $f:B\rightarrow\Bbbk$. For example, if $\A$ is  a non-degenerate evolution algebra with natural basis $B$, then the constant map given by $f_0(b_i)=0$ converts $\Cay(f_0)$ into a degenerated evolution algebra, whereas the constant map given by $f_1(b_i)=1$ transforms $\Cay(f_1)\cong\A$. A more elaborated example is constructed  in Example \ref{ex:hertwek} to illustrate  how the isomorphism type of the Cayley evolution algebra depends on the choice of the basis. 
\end{remark}


From now on, let us fix a unital finite-dimensional associative $\Bbbk$-algebra $\A$, with basis $B$, and a function $f:B\rightarrow\Bbbk$.  We now prove several properties of the Cayley evolution $\Bbbk$-algebra associated to $f$, $\Cay(f)$. First we show how units in $\A$ give rise to automorphisms of $\Cay(f)$.

\begin{proposition}\label{prop:units_vs_aut}
If $x\in\A$ is a unit such that $xB=B$ then the left-multiplication by $x$ induces $\psi_x\in\Aut\big(\Cay(f)\big)$. If moreover $B$ consists of units in $\A$, then  $\psi_x$ acts freely on $B$ for $x \neq 1$.
\end{proposition}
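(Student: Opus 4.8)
The plan is to verify that $\psi_x$, defined as left-multiplication by $x$ in $\A$ and then reinterpreted as a linear map on $\Cay(f)$, is an automorphism of the evolution algebra structure, and then to analyse its action on $B$. Since $x$ is a unit with $xB=B$, left-multiplication by $x$ permutes the basis $B$; write $x b_i = b_{\sigma(i)}$ for some permutation $\sigma$ of $\Lambda$, where the permutation is well-defined and invertible because $x$ is a unit (its inverse induces $\sigma^{-1}$). In particular $\psi_x$ is a linear isomorphism of the underlying vector space.

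\begin{proof}[Proof sketch]
The content to establish is that $\psi_x$ respects the evolution product $\cdot$ of $\Cay(f)$, i.e.\ that $\psi_x(b_i\cdot b_j)=\psi_x(b_i)\cdot\psi_x(b_j)$ for all $i,j$. First I would record that, writing $xb_i=b_{\sigma(i)}$, the map $\psi_x$ sends $b_i\mapsto b_{\sigma(i)}$, and so on basis vectors $\psi_x(b_i)\cdot\psi_x(b_j)=b_{\sigma(i)}\cdot b_{\sigma(j)}$, which vanishes unless $\sigma(i)=\sigma(j)$, i.e.\ unless $i=j$; this matches $\psi_x(b_i\cdot b_j)=0$ for $i\neq j$. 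Thus the only real computation is on the diagonal: one must check that $\psi_x(b_i\cdot b_i)=\psi_x(b_i)\cdot\psi_x(b_i)=b_{\sigma(i)}\cdot b_{\sigma(i)}$.
\end{proof}

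The key step, which I expect to be the main (and essentially only) obstacle, is the diagonal compatibility. Here I would unfold both sides using the defining formula $b_i\cdot b_i=\sum_{b_j\in B}f(b_j)\,b_ib_j$ of Definition~\ref{def:cay}, where $b_ib_j$ denotes the associative inner product in $\A$. Applying $\psi_x$ (left-multiplication by $x$) to $b_i\cdot b_i$ and using associativity of $\A$ gives $x(b_i\cdot b_i)=\sum_{b_j\in B}f(b_j)\,(xb_i)b_j=\sum_{b_j\in B}f(b_j)\,b_{\sigma(i)}b_j$, while $b_{\sigma(i)}\cdot b_{\sigma(i)}=\sum_{b_k\in B}f(b_k)\,b_{\sigma(i)}b_k$. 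The two sums agree because $b_j\mapsto b_k=b_j$ runs over the same set $B$ on both sides and the weighting $f(b_j)$ is attached to the \emph{right-hand} factor $b_j$, which $\psi_x$ leaves untouched; crucially $\psi_x$ acts only on the left, so the $f$-weights are unaffected and the identity holds termwise. This is precisely the design feature of the Cayley construction that makes left-multiplications by basis-preserving units into automorphisms, and it is where associativity of $\A$ is genuinely used.

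For the freeness claim, assume additionally that every element of $B$ is a unit of $\A$. If $\psi_x$ fixes some $b_i\in B$, then $xb_i=b_i$, and multiplying on the right by $b_i^{-1}$ (which exists in $\A$ since $b_i$ is a unit) yields $x=1$. Equivalently, $x b_i = b_i$ forces $x=b_i b_i^{-1}=1$. Hence for $x\neq 1$ the induced permutation $\sigma$ has no fixed points, so $\psi_x$ acts freely on $B$, completing the argument.
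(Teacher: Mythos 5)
Your proof is correct and follows essentially the same route as the paper's: the diagonal case via associativity of $\A$ (the $f$-weights sit on the untouched right-hand factors), the off-diagonal case via injectivity of left-multiplication by the unit $x$ on the basis (the paper phrases this as a short contradiction, you package it as the permutation $\sigma$, but it is the same argument), and freeness by cancelling the unit $b_i$. No gaps; only the organization differs, in that your key diagonal computation sits outside the proof environment rather than inside it.
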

\begin{proof}
	Given a unit $x\in\A$, we construct the map $\psi_x:\Cay(f)\rightarrow \Cay(f)$ given by $\psi_x(a)=xa$. Observe that $\psi_x$ maps the natural basis $B$ to itself since $\psi_x(B)=xB=B$. Hence it is an automorphism of $\Cay(f)$  as $\Bbbk$--vector space.  Now, 	to check that $\psi_x$ is an automorphism of evolution algebras, on the one hand we have that
$\psi_x(b_i\cdot b_i)=\psi_x(\sum_{b_j\in B}f(b_j)b_ib_j)=\sum_{b_j\in B}f(b_j)x(b_ib_j)=\sum_{b_j\in B}f(b_j)(x b_i)b_j= \sum_{b_j\in B}f(b_j)\psi_x(b_i)b_j=\psi_x(b_i)\cdot \psi_x(b_i)$. On the other hand, for $b_i \neq b_j,$  we have that $\psi_x(b_i\cdot b_j)=\psi_x(0)=x0=0 $ and we are going to prove that $\psi_x(b_i)\cdot \psi_x(b_j)$ is also zero. Suppose that it is not, hence $ \psi_x(b_i)\cdot \psi_x(b_j)=xb_i\cdot xb_j \neq 0$, and since $xb_i$ and  $xb_j $ are elements in the natural basis $B$,  they must be equal. But $x$ is a unit in $\A$, which leads to the contradiction $b_i = b_j$.

Now, suppose that $B$ consists of units in $\A$. If $\psi_x$ fixes an element $b \in B$,  then $\psi_x(b)=xb= b$,   so we obtain that $x=1$ since $b$ is a unit.
\end{proof}

Let us define $\supp(f: B \to \Bbbk):=\{b\in B\colon f(b)\ne 0\}$.

\begin{proposition}\label{prop:Cay simple_algebra}
Suppose that $B$ consists of units in $\A$, and that $\supp(f)$ generates $\A$ as an algebra. If the Cayley evolution $\Bbbk$-algebra associated to $f$, $\Cay(f)$, is regular, then $\Cay(f)$  is also simple.
\end{proposition}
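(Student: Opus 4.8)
The plan is to prove that $\Cay(f)$ has no proper nonzero ideal; since regularity gives $X=X^2\neq 0$ outright, this is exactly what is missing for simplicity. The starting observation is that $b_i\cdot b_i=\sum_{b_j\in B}f(b_j)b_ib_j=b_i w$, where $w=\sum_{b_j\in B}f(b_j)b_j$ is the vector of Remark~\ref{rmk:cayley_con_vector}. Thus the $i$-th column of the structure matrix $M_B(\Cay(f))$ is the coordinate vector of $R_w(b_i):=b_iw$, so $M_B(\Cay(f))$ is the matrix of right multiplication $R_w$ by $w$ in $\A$. Regularity means this matrix is invertible, hence $R_w$ is bijective; since finite-dimensional algebras are Dedekind-finite, this forces $w$ to be a \emph{unit} of $\A$. (Note $w\neq 0$, as $\supp(f)\neq\emptyset$ because it generates $\A$.)

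Next I would describe every nonzero ideal explicitly. Take $0\neq a=\sum_i\lambda_ib_i\in I$ and an index $j$ with $\lambda_j\neq 0$. In the evolution product all off-diagonal terms vanish, so $a\cdot b_j=\lambda_j\,b_j\cdot b_j=\lambda_j\,b_jw\in I$, and since $b_j$ and $w$ are units, $b_j\cdot b_j\in I$. Let $T=\{\,i\colon\text{some element of }I\text{ has nonzero }b_i\text{-coordinate}\,\}$ and $V_T=\operatorname{span}\{b_i\colon i\in T\}$. The same computation, applied to every $y\in I$, gives $b_i\cdot b_i=b_iw\in I$ for each $i\in T$; clearly $I\subseteq V_T$; and by regularity the family $\{b_iw\colon i\in T\}$ is linearly independent, so it already spans the $|T|$-dimensional space $V_T$. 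Hence $I=V_T=\operatorname{span}\{b_iw\colon i\in T\}$, and in particular $R_w(V_T)=V_T$. Conversely, for $k\in T$ one has $b_k\cdot z=z_k\,b_kw$, so $V_T$ is an ideal precisely when $R_w(V_T)\subseteq V_T$, i.e.\ when $T$ is closed under the edges of $\Gamma(\Cay(f),B)$. The upshot is a clean dictionary: the nonzero ideals of $\Cay(f)$ are exactly the subspaces $V_T$ for nonempty, forward-closed $T$, so simplicity is equivalent to the attached directed graph being strongly connected.

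It therefore remains to rule out a proper nonempty forward-closed $T$, and \emph{this is the step I expect to be the main obstacle}. Here is where the hypotheses that $B$ consists of units and that $\supp(f)$ generates $\A$ must enter. I would argue that, starting from any $b_j\cdot b_j=b_jw\in I$, iterating right multiplication propagates membership along the edges of the graph, the one-step neighborhood of $j$ being $\operatorname{supp}(b_jw)=\operatorname{supp}\bigl(\sum_{b_s\in\supp(f)}f(b_s)\,b_jb_s\bigr)$. When $\A=\Bbbk[G]$ and $B=G$ each product $b_jb_s$ is a single basis element $js$, so the neighborhood of $j$ is exactly $j\cdot\supp(f)$; since $\supp(f)$ generates $G$ the graph is strongly connected and $T=\Lambda$. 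The delicate point in the general case is precisely the possible \emph{cancellation} among the coordinates of $\sum_s f(b_s)b_jb_s$ over an arbitrary field, which could shrink these neighborhoods. I would handle it not vertex-by-vertex but through the invariance $R_w(V_T)=V_T$ established above: using that $w$ is a unit and that $V_T$ is spanned by a subset of the unit-basis containing at least one unit, I would show that a proper $R_w$-invariant coordinate subspace cannot coexist with $\langle\supp(f)\rangle=\A$, thereby forcing $T=\Lambda$, $I=\Cay(f)$, and simplicity.
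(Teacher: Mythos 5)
Your first two paragraphs are correct, and in fact tidier than the paper's treatment: the identification of $M_B(\Cay(f))$ with the matrix of the right-multiplication $R_w$, the conclusion that $w$ must be a unit, and the dictionary ``nonzero ideals $=$ coordinate subspaces $V_T$ with $T$ forward-closed'' amount to a self-contained proof, in this setting, of the criterion that the paper simply imports from \cite[Corollary 4.6]{Cabrera-Siles-Velasco}. The gap is your last paragraph: it is a plan, not a proof --- ``I would show that a proper $R_w$-invariant coordinate subspace cannot coexist with $\langle\supp(f)\rangle=\A$'' is precisely the statement to be proved, and no argument for it is given. Everything before it is preparation; this missing step is the entire content of the proposition.

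Moreover, that step cannot be completed from the ingredients you list, because the cancellation you flag is genuinely fatal: the proposition is false in the stated generality. Take $\A=\Q[\Z/2]$ with $\Z/2=\{1,g\}$, and the basis of units $b_1=1+2g$, $b_2=2+g$, with $f(b_1)=1$ and $f(b_2)=-2$. Then $\supp(f)=B$ spans, hence generates, $\A$; and $w=b_1-2b_2=-3$, so $b_i\cdot b_i=b_iw=-3b_i$ and $M_B(\Cay(f))=-3I_2$ is invertible, so $\Cay(f)$ is regular; yet $\operatorname{span}(b_1)$ is a proper nonzero ideal, so $\Cay(f)$ is not simple. Note that in this example $w$ is a unit, $B$ consists of units, $\supp(f)$ generates $\A$, and $\operatorname{span}(b_1)$ is $R_w$-invariant: every hypothesis of your intended endgame holds, so no argument from those hypotheses alone can force $T=\Lambda$. (The paper's own proof trips at exactly the same spot: from $\sum_{s\in S}f(s)b_js\in\operatorname{span}\{b_1,\ldots,b_r\}$ it infers $b_js\in\operatorname{span}\{b_1,\ldots,b_r\}$ for each $s$, an inference that is valid when the products $b_js$, $s\in S$, are distinct basis elements --- e.g.\ for $\A=\Bbbk[G]$ with $B=G$, the only case used in the proof of Theorem \ref{thm:main} --- but not in general, as the example shows.) So your vertex-by-vertex argument, which you carried out correctly for the group-algebra case, is essentially the only available proof; what the general statement needs is a hypothesis precluding cancellation, not a cleverer invariance argument.
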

\begin{proof}
	Suppose that $\Cay(f)$ is regular but not simple. Then, according to  \cite[Corollary 4.6]{Cabrera-Siles-Velasco}, the elements in $B$ can be reordered, namely $B=\{b_{1},b_{2},\ldots,b_{r},\ldots,b_n\}$, $1\leq r<n$, such that the structure matrix of $\Cay(f)$ becomes
	$$M_B(\Cay(f))=\left(
	\begin{array}{ccc|ccc}
	* & \ldots  & *  & * & \ldots & *\\
	\vdots & r \times r  & \vdots  & \vdots  & \ddots & \vdots\\
	* & \ldots  & *  & *  & \ldots  & *\\
	\hline
	&  &   & * & \ldots  & *\\
	 & \text{\Large 0}&   & \vdots & \ddots  & \vdots\\
	 & & & * & \ldots  & *\\
	\end{array}
	\right).$$	
Let $S=\supp(f)$ and consider an element $b\in B\smallsetminus\{b_1,\ldots,b_r\}$. Since $S$ generates $\A$ as algebra, and $B$ consists of units in $\A$, the product $b_1^{-1}b=\sum_k \lambda_k \prod_{i=1}^{l_k}s_{i,k}$, where $\lambda_k\in\Bbbk$ and $s_{i,k}\in S$. Then,
$b  = b_1b_1^{-1}b
= b_1\big(\sum_k \lambda_k \prod_{i=1}^{l_k}s_{i,k}\big)
= \sum_k \lambda_k b_1\big(\prod_{i=1}^{l_k}s_{i,k}\big).$

We claim that $b_1\big(\prod_{i=1}^{l_k}s_{i,k}\big) \in Span(\{b_1,\ldots,b_r\})$. Indeed, for $j\in\{1,\ldots,r\}$,
	$$b_j\cdot b_j=\sum_{b_k\in B}f(b_k)b_jb_k=\sum_{s\in S}f(s)b_js,$$
thus,  by the shape of $M_B(\Cay(f))$, we conclude that $b_js\in Span ( \{b_1,\ldots,b_r\} )$ for all $j\in\{1,\ldots,r\}$ and for all $s\in S$. Even more, given $\prod_{i=1}^{l_k}s_{i,k}$, for $s_{i,k}\in S$, an inductive argument shows that  $b_j\big(\prod_{i=1}^{l_k}s_{i,k}\big)\in Span(\{b_1,\ldots,b_r\})$ for all $j\in\{1,\ldots,r\}$. In particular, we obtain that $b \in Span(\{b_1,\ldots,b_r\}).$ This leads to a contradiction as $B$ is a linearly independent set,  hence $\Cay(f)$ is simple.

\end{proof}

\begin{remark}\label{notsimple}
As we mentioned in Introduction,  the evolution algebras obtained in \cite{cmv3} are regular but not simple. Recall that associated to a simple graph $\mathcal G = (V,E)$, we construct an evolution algebra $\mathcal X (G)$ over any field $\Bbbk$ with natural basis $B = \{b_v\,,\, v \in V\} \cup \{b_e \,,\,e \in E\}$ (see \cite[Definition 3.2]{cmv3}). In particular, the multiplication for the elements associated to the vertices of the graph is given by $b_v^2 = b_v,$ for all $v \in V$. Hence, for  $I_r$ the identity matrix of order $r= |V|$, and $I_s$ the identity matrix of order $s=|E|$, the structure matrix of   $\mathcal X (G)$  becomes
	$$M_B(\mathcal X (G))=\left(
	\begin{array}{ccc|ccc}
	  &    &    & * & \ldots & *\\
	  &  I_r &    & \vdots  & \ddots & \vdots\\
	  &    &    & *  & \ldots  & *\\
	\hline
	 & &   &  &   & \\
	 &\text{\large 0}&  & & I_s & \\
	 && & &   & \\
	\end{array}
	\right) $$	
which by \cite[Corollary 4.6]{Cabrera-Siles-Velasco}, implies that $\mathcal X (G)$ is  not simple for $r,s>0$.

\end{remark}

\section{Cayley evolution algebras of group algebras}\label{sec:cayleygroupalgebras}

In this section we refine the results in Section \ref{sec:general}. We fix a finite group $G$ and we consider the group algebra over $\Bbbk$, $\A=\Bbbk[G]$. Now, we take as a basis $B=G$ and a function $f:G\rightarrow\Bbbk.$
In this setting, observe that the structure constants of  $\Cay(f)$ with respect to $G$ can be given by $g\cdot g=\sum_{k\in G}f(g^{-1}k)k$.

\if{}
\begin{remark}\label{remark:Cay_simple}
Observe that Proposition \ref{prop:Cay simple_algebra} can be done starting from a finite group $G$ instead of an algebra. So, if $\Cay(f)$ is regular and $S=\supp(f)$ generates $G$, then $\Cay(f)$ is simple.
\end{remark}
\fi

Our first result is a straightforward consequence of Proposition \ref{prop:units_vs_aut}:
\begin{proposition}\label{prop:faith}
	$G$ acts faithfully on $\Cay(f)$ as a permutation group over its natural basis by left-multiplication. That is, $G\leq \Aut(\Cay(f))$.
\end{proposition}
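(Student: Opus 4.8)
The plan is to apply Proposition~\ref{prop:units_vs_aut} directly, since the group algebra $\A=\Bbbk[G]$ is precisely the setting where its hypotheses are automatically satisfied. First I would observe that the chosen basis $B=G$ consists of units in $\A=\Bbbk[G]$: each $g\in G$ is invertible with inverse $g^{-1}\in G\subseteq\Bbbk[G]$. Moreover, for any $g\in G$, left-multiplication satisfies $gB=gG=G=B$, so every element of $G$ is a unit $x$ with $xB=B$. Thus Proposition~\ref{prop:units_vs_aut} applies to each $g\in G$ and yields an automorphism $\psi_g\in\Aut(\Cay(f))$ given by $\psi_g(a)=ga$.

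Next I would verify that $g\mapsto\psi_g$ is a group homomorphism $G\to\Aut(\Cay(f))$. Associativity in $\A$ gives $\psi_g\circ\psi_h(a)=g(ha)=(gh)a=\psi_{gh}(a)$, so the assignment respects the group operation, and $\psi_1=\id$ since $\A$ is unital. Hence we indeed obtain an action of $G$ on $\Cay(f)$ as $\Bbbk$-algebra automorphisms, permuting the natural basis $B=G$ by left translation.

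It then remains to check faithfulness. Here I would invoke the second statement of Proposition~\ref{prop:units_vs_aut}: since $B=G$ consists of units, $\psi_g$ acts freely on $B$ for $g\neq 1$, so in particular $\psi_g$ fixes no basis element when $g\neq 1$, forcing $\psi_g\neq\id$ for $g\neq 1$. Concretely, if $\psi_g=\id$ then $g\cdot 1=g=1$, so the kernel of $g\mapsto\psi_g$ is trivial. This embeds $G$ into $\Aut(\Cay(f))$ as a permutation group on $G$ acting by left-multiplication, giving $G\leq\Aut(\Cay(f))$.

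The argument is essentially routine given Proposition~\ref{prop:units_vs_aut}; there is no substantial obstacle, as the group-algebra structure makes every hypothesis transparent. The only point requiring minor care is confirming that the homomorphism is well-defined into the \emph{automorphism} group (not merely the linear automorphisms), but this is already delivered by Proposition~\ref{prop:units_vs_aut}, which establishes that each $\psi_g$ respects the evolution-algebra multiplication.
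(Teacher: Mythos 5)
Your proof is correct and follows essentially the same route as the paper: both apply Proposition~\ref{prop:units_vs_aut} with $B=G$ (units, $gG=G$) to get $\psi_g\in\Aut(\Cay(f))$, and both use $\psi_g\circ\psi_h=\psi_{gh}$ to obtain a subgroup isomorphic to $G$. Your treatment of faithfulness (via the free action and the observation $\psi_g(1)=g$) is in fact slightly more explicit than the paper's, which leaves that step implicit.
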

\begin{proof}
The elements of $G$ are units in $\A=\Bbbk[G]$ and since $G^2=G$, then $GB=B$. Hence by Proposition \ref{prop:units_vs_aut}, $\psi_g \in \Aut\big(\Cay(f)\big).$ Moreover, for any $g,h\in G$, $\psi_g\circ\psi_h=\psi_{gh}$, thus the units $G\subset \A$ produce  a subgroup of automorphisms which is isomorphic to $G$.
\end{proof}

We now recall the definition of Cayley graph \cite[Definition 4-5]{CayleyGraph}.

\begin{definition}\label{def:cayley_graph}
Let $G$ be a finite group, and $S$ be a subset of $G$. The \textit{Cayley graph} of $G$ with respect to $S$ is the directed graph $\Cay(G,S)$ with vertex set $G$ and edges $(g,gs)$ for each $g\in G$ and $s\in S$.
Moreover, we can assign a colour $c_s$  to each edge  $(g,gs)$,  $s\in S$, where $c_s\ne c_{s'}$ if $s\ne s'$. Hence, we obtain an  \textit{edge-coloured directed graph}, the coloured Cayley graph. In this work we will denote it by $\Cay^{cor}(G,S)$.\end{definition}

The Cayley evolution algebras that we have previously introduced are closely related to Cayley graphs. By comparing Definitions \ref{def:digraph} and \ref{def:cayley_graph}  we immediately obtain:

 \begin{lemma}\label{lem:graph_of_cayley_is_cayley}
Let $S=\supp(f)$, and assume $f|_S$ is injective. Then $\Gamma\big(\Cay(f),G\big)=\Cay(G,S)$ as abstract directed graphs while  $\Gamma^w\big(\Cay(f),G\big)=\Cay^{cor}(G,S)$ as edge-coloured directed graphs.
 \end{lemma}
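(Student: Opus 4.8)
The plan is to prove Lemma \ref{lem:graph_of_cayley_is_cayley} by a direct comparison of the two graph constructions, matching their vertex sets, edge sets, and (for the coloured version) their colourings. Both $\Gamma(\Cay(f),G)$ and $\Cay(G,S)$ have vertex set $G$, so the only work is to identify the edge sets. I would start from the explicit structure constants computed just before the statement, namely $g\cdot g=\sum_{k\in G}f(g^{-1}k)\,k$, which tells us that the structure constant $\omega_{kg}$ attached to the ordered pair $(k,g)$ (or, in the notation of Definition \ref{def:digraph}, the coefficient governing the edge from vertex $g$ to vertex $k$) equals $f(g^{-1}k)$.

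First I would unwind the definition of the directed graph $\Gamma(\Cay(f),G)$: an ordered pair $(g,k)\in G\times G$ is an edge precisely when the corresponding structure constant is nonzero, i.e.\ when $f(g^{-1}k)\neq 0$. Since $S=\supp(f)=\{b\in G : f(b)\neq 0\}$, this condition is equivalent to $g^{-1}k\in S$, that is, $k=gs$ for some $s\in S$. This is exactly the edge condition in Definition \ref{def:cayley_graph}, where the edges are the pairs $(g,gs)$ for $g\in G$, $s\in S$. Hence the two edge sets coincide, giving $\Gamma(\Cay(f),G)=\Cay(G,S)$ as abstract directed graphs. Note that the injectivity hypothesis on $f|_S$ is not needed for this first equality; it only becomes relevant for the coloured statement.

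For the second equality I would compare weightings with colourings. In $\Gamma^w(\Cay(f),G)$ the edge $(g,gs)$ carries the weight $w(g,gs)=f(g^{-1}(gs))=f(s)$, which depends only on $s$ and not on $g$. In $\Cay^{cor}(G,S)$ the edge $(g,gs)$ carries the colour $c_s$, again depending only on $s$. Thus both constructions assign to every $S$-labelled family of parallel edges a single attribute determined by $s\in S$. The assumption that $f|_S$ is injective guarantees that distinct elements $s\neq s'$ of $S$ produce distinct weights $f(s)\neq f(s')$, matching the defining requirement $c_s\neq c_{s'}$ for $s\neq s'$ in the coloured Cayley graph; so the weight function $w$ and the colouring $c$ encode exactly the same partition of the edges. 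Identifying each colour $c_s$ with the scalar $f(s)$ therefore yields an isomorphism of edge-coloured directed graphs $\Gamma^w(\Cay(f),G)=\Cay^{cor}(G,S)$.

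I do not expect any serious obstacle here: the lemma is essentially a bookkeeping verification that the two definitions agree once the structure constants are written in the form $f(g^{-1}k)$. The only point requiring a moment's care is ensuring the correct indexing convention for the edges of $\Gamma(\Cay(f),G)$ (which vertex is the source and which is the target), so that the left-multiplication orientation $g\mapsto gs$ of the Cayley graph is reproduced rather than its reverse; this is settled by reading off the structure constant $f(g^{-1}k)$ from $g\cdot g=\sum_k f(g^{-1}k)k$ and tracking indices consistently with Definition \ref{def:digraph}.
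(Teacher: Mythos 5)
Your proposal is correct and takes essentially the same route as the paper: the paper offers no written proof at all, stating the lemma as something one ``immediately obtains'' by comparing Definitions \ref{def:digraph} and \ref{def:cayley_graph}, and your argument is exactly that comparison carried out explicitly (edge $(g,k)$ exists iff $f(g^{-1}k)\neq 0$ iff $k=gs$ with $s\in S$, and the weight $f(s)$ plays the role of the colour $c_s$). Your two side remarks --- that injectivity of $f|_S$ is needed only for the coloured statement, and that one must fix the source/target convention in Definition \ref{def:digraph} so that the orientation $g\mapsto gs$ comes out rather than its reverse --- are both accurate and, if anything, more careful than the paper itself.
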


One of the key features of the coloured Cayley graph $\Cay^{cor}(G,S)$ is that whenever $S$ generates $G$, the group $G$ can be represented as the full group of automorphisms of $\Cay^{cor}(G,S)$, that is $G\cong \Aut (\Cay^{cor}(G,S))$ \cite[Theorem 4-8]{CayleyGraph}. In a similar way we prove:

\begin{proposition}\label{th:coprime}
Suppose that  the evolution $\Bbbk$-algebra $\Cay(f)$ is regular, $S=\supp(f)$ generates $G$ and $f|_S$ injective. If $S$ contains two elements of coprime order, then $G \cong \Aut (\Cay(f))$.
\end{proposition}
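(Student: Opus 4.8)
The plan is to establish the reverse inclusion $\Aut(\Cay(f))\le G$, the inclusion $G\le\Aut(\Cay(f))$ being already Proposition \ref{prop:faith}. So I fix an arbitrary $\phi\in\Aut(\Cay(f))$ and aim to show $\phi=\psi_h$ for some $h\in G$. The first step is to see that $\phi$ permutes the natural basis up to nonzero scalars. Indeed, for $g\ne h$ we have $\phi(b_g)\cdot\phi(b_h)=\phi(b_g\cdot b_h)=0$, so $\phi(G)=\{\phi(b_g)\}_{g\in G}$ is again a natural basis of $\Cay(f)$. Since $S$ generates $G$, the directed graph $\Gamma(\Cay(f),G)=\Cay(G,S)$ of Lemma \ref{lem:graph_of_cayley_is_cayley} is (strongly) connected, and $\Cay(f)$ is regular; invoking the fact that natural bases of a regular evolution algebra with connected graph are unique up to reordering and rescaling (see \cite{Elduque-Labra-2015}), I obtain a bijection $\sigma\colon G\to G$ and scalars $\lambda_g\in\Bbbk^*$ with $\phi(b_g)=\lambda_g\,b_{\sigma(g)}$.

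Next I would extract the governing relations by comparing $\phi(b_g^2)$ with $\phi(b_g)^2$. Writing $b_g^2=\sum_{s\in S}f(s)\,b_{gs}$ and matching supports and coefficients yields, for every $g\in G$ and $s\in S$,
\[
\sigma(gs)=\sigma(g)\,\pi_g(s)\quad\text{and}\quad \lambda_g^2\,f\!\left(\pi_g(s)\right)=f(s)\,\lambda_{gs},
\]
for a bijection $\pi_g$ of $S$. In particular $\sigma$ is an automorphism of the underlying uncoloured graph $\Cay(G,S)$, and $\phi$ will lie in $G$ precisely when $\sigma$ is colour preserving (that is, $\pi_g=\id$) and all $\lambda_g=1$: then $\sigma(gs)=\sigma(g)s$, so $\sigma$ is left translation by $\sigma(e)$, and the classical identification $\Aut\big(\Cay^{cor}(G,S)\big)\cong G$ from \cite[Theorem 4-8]{CayleyGraph}, together with Lemma \ref{lem:graph_of_cayley_is_cayley}, gives $\phi=\psi_{\sigma(e)}$.

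The engine that produces both facts is the coprime-order hypothesis, applied to the monochromatic cycles of $\Cay(G,S)$. If $a\in S$ has order $d$, then right multiplication by $a$ partitions $G$ into cycles $g,ga,\dots,ga^{d-1}$ of length $d$; iterating the relation $\lambda_{gs}=\lambda_g^2\,f(\pi_g(s))/f(s)$ around such a cycle telescopes, and the doubling coming from the squaring $\phi(b_g)^2=\lambda_g^2 b_{\sigma(g)}^2$ produces the exponent $2^{d}-1$. Once the colour is constant along the cycle the intervening $f$-factors cancel and one is left with $\lambda_g^{\,2^{d}-1}=1$. Taking $a,b\in S$ of coprime orders $p,q$ and using the arithmetic identity $\gcd\!\left(2^{p}-1,2^{q}-1\right)=2^{\gcd(p,q)}-1=1$ then forces $\lambda_g=1$ for every $g$. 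Substituting $\lambda_g=1$ back into $\lambda_g^2\,f(\pi_g(s))=f(s)\,\lambda_{gs}$ gives $f(\pi_g(s))=f(s)$, and since $f|_S$ is injective this yields $\pi_g=\id$; thus $\sigma$ is colour preserving and the argument closes as above.

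The delicate point — and the step I expect to be the main obstacle — is exactly the clause \emph{once the colour is constant along the cycle}: a priori the colour permutation $\pi_g$ depends on the vertex $g$, so the image of a monochromatic cycle need not be monochromatic, and the telescoped product retains uncancelled weights $f(\pi_{ga^i}(a))$. Thus triviality of the scalars and colour preservation are entangled, and the crux is to disentangle them; the example of $\Cay(\Z_2,\{a\})$, which admits automorphisms with $\lambda$ a primitive cube root of unity, shows that the coprime hypothesis is genuinely necessary and that this is where all the content lies. I would resolve it by first normalising $\sigma(e)=e$ (composing $\phi$ with a suitable $\psi_h$), reducing the claim to showing that the only automorphism fixing the line $\Bbbk b_e$ is the identity, and then running the coprime cycle computation so as to control the weight factors arising from $\pi_g$ — the careful bookkeeping that makes the two coprime monochromatic cycles through a common vertex yield simultaneously $\lambda_g^{\,2^{p}-1}=\lambda_g^{\,2^{q}-1}=1$ being the heart of the proof.
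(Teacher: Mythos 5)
Your proposal is not a complete proof, and the gap is exactly the one you flag yourself in your final paragraph. The setup is sound and runs parallel to the paper's: the monomial form $\phi(b_g)=\lambda_g b_{\sigma(g)}$, the governing relations $\sigma(gs)=\sigma(g)\pi_g(s)$ and $\lambda_{gs}f(s)=\lambda_g^2 f(\pi_g(s))$, and the closing step via $\Aut\big(\Cay^{cor}(G,S)\big)\cong G$ are all correct. But the engine never runs: telescoping around the $a$-cycle gives $\lambda_g^{2^d-1}=\prod_{i=0}^{d-1}\bigl(f(a)/f(\pi_{ga^i}(a))\bigr)^{2^{d-1-i}}$, and to conclude $\lambda_g^{2^d-1}=1$ you need $\pi_{ga^i}(a)=a$, i.e.\ colour preservation; conversely, your route to $\pi_g=\id$ is to substitute $\lambda\equiv 1$ into the coefficient relation. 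Each half presupposes the other, and the announced ``careful bookkeeping'' that would break this circle is precisely the content of the proposition --- it is named, not performed. Moreover, the obstacle is not a formality that patience with the same relations would remove, because the desired conclusion genuinely fails for regular evolution algebras satisfying only your structural hypotheses: over $\Q$, the algebra $b_1^2=2b_1+8b_2$, $b_2^2=b_1+b_2$ is regular, its graph is connected and contains loops (so every diagonal automorphism is trivial, the analogue of your coprime-cycle condition), yet $\phi(b_1)=2b_2$, $\phi(b_2)=\tfrac{1}{2}b_1$ is an automorphism with non-trivial scalars whose permutation part does not preserve the weights. Any correct completion must therefore exploit the specifically Cayley feature that every edge $(g,gs)$ carries one and the same weight $f(s)$, and your sketch never reaches the point where that is used.

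For comparison, the paper factors the problem through the exact sequence $1\to\Diag(\Cay(G,S))\to\Aut(\Cay(f))\to\Aut(\Cay(G,S))$ of \cite{Elduque-Labra-2019}. There, coprimality of orders is used only to compute the balance ${\rm b}(\Cay(G,S))=1$, so that $\Diag(\Cay(G,S))=\mu_{2^{1}-1}=\{1\}$; your telescoping with exponent $2^d-1$ is exactly this balance computation, but applied only to the kernel of the sequence, i.e.\ to diagonal automorphisms, where $\sigma=\id$ forces $\pi_g=\id$ and the $f$-factors cancel identically, so no entanglement can arise. The paper then performs the weight comparison on the image, treating each $\psi\in\Aut(\Cay(f))$ as a scalar-free permutation of the basis to deduce $f(g^{-1}k)=f(\psi(g)^{-1}\psi(k))$ and hence $\Aut(\Cay(f))\leq\Aut(\Cay^{cor}(G,S))\cong G$. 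Observe that this passage --- from ``$\Aut(\Cay(f))$ embeds in $\Aut(\Cay(G,S))$'' to ``automorphisms act on the basis with trivial scalars'' --- is precisely the disentanglement you could not carry out, and the two-dimensional example above shows it does not follow from triviality of $\Diag$ alone. So your instinct about where the difficulty sits is accurate, and it even pinpoints the step over which the paper itself passes most quickly; but locating the crux is not resolving it, and as submitted your proposal is a plan whose decisive step is missing.
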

\begin{proof}
First of all,  we make use of the identification $\Gamma\big(\Cay(f),S\big)=\Cay(G,S)$  as abstract directed graphs, according to Lemma \ref{lem:graph_of_cayley_is_cayley}. Now, since $\Cay(f)$ is regular,  by \cite[Theorem 3.2]{Elduque-Labra-2019}, there exists an exact sequence
	$$1\hookrightarrow \Diag(\Cay(G,S))\rightarrow \Aut (\Cay(f))\rightarrow \Aut (\Cay(G,S)),$$
with  $\Diag(\Cay(G,S)) = \mu_N = \{ k \in \mathbb K \, ; k^N =1\}$ for  $N = 2^{\rm b(\Cay(G, S))}-1 $ 	where  $\operatorname b(\Cay(G, S))$ is the balance of $ \Cay(G,S)$ (see  \cite[Section 2]{Elduque-Labra-2019} and \cite[Theorem 2.7]{Elduque-Labra-2019}).

We first calculate $\Diag(\Cay(G,S))$. Notice that for every $s\in S$,  the graph $\Cay(G,S)$ has a directed cycle of length  the order of the element $s$, $o(s)$. Therefore, if  $s_1,s_2\in S$ are elements such that ${\rm gcd} (o(s_1),o(s_2))=1$, we obtain $$1\leq {\rm b}(\Cay(G,S))= {\rm gcd}\{|{\rm b} (\gamma)|\,:\, \gamma\text{ cycle in} \,\Cay(G,S) \}\leq {\rm gcd} (o(s_1),o(s_2))=1.$$
Hence $N=2^{{\rm b}(\Cay(G,S))-1}=1$, and  $\Diag(\Cay(G,S))=\mu_1=\{1\}$ so, $$\Aut(\Cay(f))\leq \Aut(\Cay(G,S)).$$
Therefore, elements in $\Aut(\Cay(f))$ can be thought as those graph automorphisms of $\Cay(G,S)$ that preserve the structure matrix $M_G(\Cay(f))$ in the following sense. Let $\psi \in \Aut(\Cay(f))$ and $g \in G$ an arbitrary element.
As  $\psi$ preserves the product in $\Cay(f)$,  for  $g\cdot g=\sum_{k\in G}f(g^{-1}k)k$, we have that

\begin{equation}\label{psiprod}\psi (g\cdot g) =\sum_{k\in G}f(g^{-1}k) \psi (k)
\end{equation}
 must coincide with $\psi (g) \cdot \psi (g) = \sum_{k\in G}f( \psi(g)^{-1}k)k$.  Since $\psi $ can be thought as a graph automorphism of $\Cay(G,S)$,   $\psi$ is in particular a permutation of the elements in $G$, which allows us to express
 \begin{equation}\label{prodpsi}
 \psi (g) \cdot \psi (g) = \sum_{k \in G}f( \psi(g)^{-1}\psi(k)) \psi(k).
 \end{equation}
 Comparing both \eqref{psiprod} and \eqref{prodpsi} we deduce that $  f(g^{-1}k)= f( \psi(g)^{-1}\psi(k)) $  for arbitrary $g, k \in G$. In other words, $\psi$ is an automorphism of the weighted graph $\Gamma^w\big(\Cay(f),G\big)$ which, by
 Lemma \ref{lem:graph_of_cayley_is_cayley} coincides with $\Cay^{cor}(G,S)$. That is,  $$\Aut(\Cay(f))\leq \Aut(\Cay^{cor}(G,S))\cong G.$$
	Since $G\leq \Aut(\Cay(f))$ by Proposition \ref{prop:faith}, we conclude that $G \cong \Aut (\Cay(f)) $.
\end{proof}


We prove now that if the field $\Bbbk$ is large enough compared with $|G|$ (for instance, if $\Bbbk$ is infinite),  it is then possible to construct $\Cay(f)$, a Cayley evolution $\Bbbk$-algebra associated to the function $f: G \to \Bbbk$, fulfilling  hypotheses from  Lemma \ref{lem:graph_of_cayley_is_cayley} and  Proposition \ref{th:coprime}.

The following lemma makes the condition ``large enough compared with an integer" precise. Recall that a polynomial $P\in \Bbbk[X_1,\ldots, X_m]$ is said to be homogeneous of degree $n\in\mathbb{N}$, denoted by $\deg(P)=n$, if $P$ is a linear combination of monomials $X_1^{d_1}X_2^{d_2}\cdots X_m^{d_m}$ such that $\sum d_i=n$.

\begin{lemma}\label{lem:large_enough}
Let $\Bbbk$ be a not necessarily finite field with multiplicative group of order $|\Bbbk^*|$, and let $n$ and $m$ be  positive  integers such that $|\Bbbk^*|\geq 2n$ and $n \geq m$. Then, for any homogeneous polynomial of degree $n$ in $m$ variables
$$P=\sum_{i=1}^m \lambda_iX_i^n + Q\in\Bbbk[X_1,\ldots, X_m],$$ with $\lambda_i \neq 0$ for all $i$, and where every monomial in $Q$ involves at least two different variables,  there exists a choice of non-zero pairwise distinct values $k_1,\ldots, k_m\in\Bbbk^*$  such that $P(X_i=k_i: i=1,\ldots, m)\ne 0$.
\end{lemma}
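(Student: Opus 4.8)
The plan is to find values $k_1,\ldots,k_m\in\Bbbk^*$, pairwise distinct, avoiding a root of $P$. I would proceed by induction on $m$, the number of variables. The base case $m=1$ reduces to the single-variable polynomial $P(X_1)=\lambda_1 X_1^n$ (there is no $Q$ since every monomial of $Q$ involves at least two variables), whose only root is $0$; so any $k_1\in\Bbbk^*$ works, and $|\Bbbk^*|\geq 2n\geq 2$ provides one. For the inductive step, I would substitute a suitable value for one variable, say $X_m=c$, and reorganize the resulting polynomial in the remaining $m-1$ variables so as to preserve the shape hypothesized in the lemma, namely a nonzero ``pure power'' coefficient on each surviving variable plus a remainder whose monomials each involve at least two surviving variables.

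The key technical point is the reorganization. After setting $X_m=c$, the polynomial $P(X_1,\ldots,X_{m-1},c)$ is a (no longer homogeneous) polynomial in $X_1,\ldots,X_{m-1}$. To recover the inductive hypothesis I would instead view it degree by degree: for each fixed top-degree piece I can isolate the coefficient of $X_i^n$, which remains $\lambda_i\neq 0$ since monomials of $Q$ involve two distinct variables and hence cannot contribute a pure $X_i^n$ after specializing one \emph{other} variable. The cleaner route, which I expect to pursue, is to treat $P$ as a polynomial in $X_m$ alone with coefficients in $\Bbbk[X_1,\ldots,X_{m-1}]$: the leading coefficient (of $X_m^n$) is the constant $\lambda_m$, so for generic specialization of the other variables the resulting one-variable polynomial in $X_m$ is nonzero of degree exactly $n$, hence has at most $n$ roots.

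Carrying this out carefully, I would argue as follows. By the inductive hypothesis applied to the ``reduced'' polynomial $P(X_1,\ldots,X_{m-1},0)=\sum_{i=1}^{m-1}\lambda_i X_i^n + Q'$ (where $Q'$ collects the monomials of $Q$ not involving $X_m$, each still involving at least two of the remaining variables), together with the fact that $|\Bbbk^*|\geq 2n \geq 2(n)\geq 2(m-1)$ so the hypotheses of the lemma transfer, I obtain pairwise distinct $k_1,\ldots,k_{m-1}\in\Bbbk^*$ with $P(k_1,\ldots,k_{m-1},0)\neq 0$. Fixing these, the one-variable polynomial $g(X_m):=P(k_1,\ldots,k_{m-1},X_m)\in\Bbbk[X_m]$ has $g(0)\neq 0$ and degree at most $n$, so it has at most $n$ roots in $\Bbbk$. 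I must choose $k_m\in\Bbbk^*$ with $g(k_m)\neq 0$ and $k_m\notin\{k_1,\ldots,k_{m-1}\}$: the forbidden set has size at most $n$ (roots of $g$) plus $m-1$ (the previously chosen values) plus possibly $0$, and since $m-1\leq n-1$, the total number of forbidden nonzero values is at most $n+(m-1)\leq 2n-1<|\Bbbk^*|$, leaving an admissible $k_m$.

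The main obstacle I anticipate is the bookkeeping in the inductive step: ensuring that specializing a variable to $0$ genuinely preserves the structural hypothesis (nonzero pure-power coefficients $\lambda_i$ on \emph{every} surviving variable, and the two-variable condition on the remainder), and that the counting bound $|\Bbbk^*|\geq 2n$ is used tightly enough to accommodate simultaneously the at most $n$ roots of $g$ and the $m-1$ distinctness constraints. The hypothesis $n\geq m$ is exactly what makes the count $n+(m-1)\leq 2n-1$ go through, so I expect this inequality to be the crucial arithmetic input at the final selection step rather than in the algebraic manipulation.
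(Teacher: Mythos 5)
Your proof is correct and rests on the same mechanism as the paper's: specialize all but one variable, observe that the resulting univariate polynomial is nonzero with at most $n$ roots, and use $|\Bbbk^*|\geq 2n > n+(m-1)$ (this is exactly where $n\geq m$ enters) to pick the last value distinct from the previously chosen ones. The only difference is packaging: the paper needs no induction, since it chooses $k_1,\ldots,k_{m-1}$ arbitrarily (pairwise distinct in $\Bbbk^*$) and deduces that $\widetilde{P}(X_m):=P(X_i=k_i : i<m)$ is nontrivial of degree $n$ from its leading coefficient $\lambda_m\neq 0$ --- precisely the ``cleaner route'' you mention but then abandon in favor of the inductive detour through $g(0)\neq 0$, which is sound but unnecessary.
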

\begin{proof}
Fix $n \geq m$ and $P\in\Bbbk[X_1,\ldots, X_m]$ as in the hypotheses. Since  $|\Bbbk^*| \geq m$,  we can choose $k_1,\ldots, k_{m-1}\in\Bbbk^*$ pairwise distinct values.  Hence, the polynomial $\widetilde{P}(X_m):=P(X_i=k_i: i<m)$ is a non-trivial polynomial of degree $n$, and therefore the equation $\widetilde{P}(X_m)=0$ admits a most $n$  different non-zero solutions. That is, there exist at most $\kappa_1,\ldots,\kappa_n\in\Bbbk^*$ such that  $\widetilde{P}(X_m=\kappa_r)=0$, for $r=1, \ldots, n$. Since $|\Bbbk^*|\geq 2n> m-1+n$, there exists a non-zero value $k_m\in\Bbbk^*$ such that $k_m\ne k_j$ for $j=1,\ldots,m-1$ and $k_m\ne \kappa_r$ for $r=1,\ldots, n$. Then, $k_1,\ldots, k_m\in\Bbbk^*$ are non-zero pairwise distinct values and $P(X_i=k_i: i=1,\ldots, m)\ne 0$.
\end{proof}

\begin{theorem}\label{th:ultimo}
Let $\Bbbk$ be a not necessarily finite field with multiplicative group of order $|\Bbbk^*|$ and let $G$ be a finite group. If $|\Bbbk^*|\geq 2 |G|$,  then for any non-empty $S\subset G$ there exists a function $f \colon G\rightarrow \Bbbk$ such that:
	\begin{enumerate}
		\item\label{th:ultimo_1} $S=\supp(f)$
		\item\label{th:ultimo_2} $f|_S$ is injective
		\item\label{th:ultimo_3} $\Cay(f)$ is regular.
	\end{enumerate}
\end{theorem}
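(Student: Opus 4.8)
The plan is to treat $f$ as the vector of its values on $S$: write $x_s := f(s)$ for $s\in S$ and set $f\equiv 0$ off $S$. Then conditions \eqref{th:ultimo_1} and \eqref{th:ultimo_2} say precisely that the scalars $(x_s)_{s\in S}$ are nonzero and pairwise distinct, while condition \eqref{th:ultimo_3} says that the structure matrix $M_G(\Cay(f))$ is invertible, i.e.\ $\det M_G(\Cay(f))\ne 0$. Since $\det M_G(\Cay(f))$ is a polynomial in the unknowns $(x_s)_{s\in S}$, the whole statement reduces to finding nonzero pairwise distinct values of these unknowns at which the determinant does not vanish. This is exactly the kind of conclusion Lemma \ref{lem:large_enough} provides, so the real work is to check that $\det M_G(\Cay(f))$, viewed as a polynomial in the $x_s$, has the special shape demanded by that lemma.

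First I would read off the structure matrix. Using the natural basis $B=G$ and $g\cdot g=\sum_{k\in G}f(g^{-1}k)\,k$, the entry of $M_G(\Cay(f))$ in row $k$ and column $g$ equals $f(g^{-1}k)$, which is either $0$ (when $g^{-1}k\notin S$) or the single variable $x_{g^{-1}k}$. Hence every entry is a variable or $0$, and the Leibniz expansion shows that $\det M_G(\Cay(f))$ is homogeneous of degree $n:=|G|$ in the $m:=|S|$ variables $(x_s)_{s\in S}$. This matches the bookkeeping of Lemma \ref{lem:large_enough}: the inequality $n\ge m$ holds because $S\subseteq G$, and the hypothesis $|\Bbbk^*|\ge 2|G|=2n$ is exactly what the lemma requires.

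The crux is to isolate the pure powers. In the Leibniz expansion
$$\det M_G(\Cay(f))=\sum_{\sigma}\sign(\sigma)\prod_{g\in G}f\big(g^{-1}\sigma(g)\big),$$
a summand survives only when $g^{-1}\sigma(g)\in S$ for every $g$, in which case it contributes the monomial $\sign(\sigma)\prod_{g}x_{g^{-1}\sigma(g)}$. Such a monomial is a pure power $x_s^{\,n}$ precisely when $g^{-1}\sigma(g)=s$ for all $g$, i.e.\ when $\sigma$ is the right translation $\rho_s\colon g\mapsto gs$, which is a single, well-defined permutation of $G$. Consequently the coefficient of $x_s^{\,n}$ is exactly $\sign(\rho_s)=\pm1\ne0$ for each $s\in S$, while every remaining surviving monomial involves at least two distinct variables and so lands in the error term $Q$. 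Thus $\det M_G(\Cay(f))=\sum_{s\in S}\sign(\rho_s)\,x_s^{\,n}+Q$ is precisely of the form to which Lemma \ref{lem:large_enough} applies.

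To finish, I would invoke Lemma \ref{lem:large_enough} with these data to obtain nonzero pairwise distinct $k_s\in\Bbbk^*$, $s\in S$, at which the determinant is nonzero, and then define $f(s)=k_s$ for $s\in S$ and $f(g)=0$ otherwise; this $f$ satisfies \eqref{th:ultimo_1}--\eqref{th:ultimo_3} simultaneously. The only genuinely nonroutine step is the identification of the pure-power coefficients with the signs of right translations, which is what guarantees that every $\lambda_i$ in the lemma is nonzero; once that is seen, the rest is degree bookkeeping and a direct appeal to Lemma \ref{lem:large_enough}.
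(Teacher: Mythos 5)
Your proof is correct, and its overall route is the same as the paper's: both reduce the theorem to Lemma \ref{lem:large_enough} through the identical decomposition of the determinant (with the variables off $S$ set to zero) as $\sum_{s\in S}\lambda_s x_s^{|G|}+Q$, with every $\lambda_s\ne 0$ and every monomial of $Q$ involving at least two variables, followed by the same bookkeeping $n=|G|\ge m=|S|$ and $|\Bbbk^*|\ge 2n$. The one step you handle differently is the crux, namely $\lambda_s\ne 0$: you expand by the Leibniz formula and observe that the unique permutation producing the pure power $x_s^{|G|}$ is the right translation $\rho_s\colon g\mapsto gs$, so $\lambda_s=\sign(\rho_s)=\pm 1$; the paper instead identifies the restricted polynomial with the determinant of the variable adjacency matrix of the Cayley graph $\Cay(G,\{s\})$, invokes \cite[Theorem 2]{harary}, and uses the decomposition of that graph into $|G|/o(s)$ disjoint directed $o(s)$-cycles to obtain $\lambda_s=(-1)^{(o(s)+1)|G|/o(s)}$. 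The two computations agree in substance --- the paper's cycle decomposition is precisely the cycle type of $\rho_s$, and its sign formula is just $\sign(\rho_s)$ evaluated through that cycle type --- but yours is self-contained and avoids the external citation, whereas the paper's keeps the Cayley-graph dictionary (used again in Lemma \ref{lem:graph_of_cayley_is_cayley} and Proposition \ref{th:coprime}) in the foreground and records the sign explicitly.
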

\begin{proof}
For any function $f:G\rightarrow\Bbbk$,  if we denote by $X_g$ the variable such that $X_g=f(g)$,  the structure coefficients of  $\Cay(f)$ can be described by
$$g\cdot g=\sum_{h\in G}f(h)gh=\sum_{k\in G}f(g^{-1}k)k=\sum_{k\in G}X_{g^{-1}k}k, $$ and therefore,  the structure matrix of $\Cay(f)$ becomes
$$M_B(\Cay(f))=(X_{g^{-1}k})_{g,k\in G}\in M_{|G|\times |G|}(\Bbbk),$$
whose determinant  is a homogeneous polynomial $P(X_g\,:\,g\in G)$ of degree $|G|$, that we now describe.
Given $k\in G$, let  $P_k:=P(X_g=0: g\neq k)\in \Bbbk[X_k],$ thus $P=\sum_{k\in G} P_k + Q$ where $Q$ is a homogeneous polynomial in $\Bbbk[X_g : g\in G]$ such that every monomial in $Q$ involves at least two different variables. Since $P$ is homogeneous, $P_k(X_k)=\lambda_k X_k^{|G|}$ for some $\lambda_k\in\Bbbk$. Following the notation of \cite{harary}, we can think of the polynomials $P_k$ as 
$$P_k(X_k)=\det\Big(\Adj \big(\Cay(G,\{k\}), X_k\big)\Big)$$ where $\Adj\big(\Cay(G,\{k\}), X_k\big)$ denotes the variable adjacency matrix of the Cayley graph $\Cay(G,\{k\})$.
And, according to  \cite[Theorem 2]{harary}, as $\Cay(G,\{k\})$ is a directed graph consisting of a total of $\frac{|G|}{o(k)}$ disjoint directed cycles of length $o(k)$, which are also the strong components of $\Cay(G,\{k\})$, we obtain that
$$P_k(X_k)= \Big( \det \big( \Adj \big(o(k)\text{-cycle}, X_k \big) \big) \Big)^{\frac{|G|}{o(k)}}.$$
Since the variable adjacency matrix of a directed $o(k)$-cycle is of the form
$$\left(
	\begin{array}{ccccc}
	0 & X_{k}  & 0  & \ldots & 0\\
	0 & 0  & X_{k}  & \ldots  & 0\\
	\vdots & \vdots  & \vdots  & \ddots  & \vdots\\
	0 & 0  & 0  & \ldots & X_{k}\\
	X_{k} & 0 & 0  & \ldots & 0\\
	\end{array}
	\right) \in M_{o(k) \times o(k)} (\Bbbk)$$
we conclude that $$P_k(X_k)=\Big((-1)^{o(k)+1}X_{k}^{o(k)}\Big)^{\frac{|G|}{o(k)}}= (-1)^{\frac{(o(k)+1)|G|}{o(k)}} X_k^{|G|}.$$

We proceed now to construct, for a given $S \subset G$,  the desired function $f\colon G\to \Bbbk$. Let  $P_S:=P(X_g=0: g\not\in S)\in\Bbbk[X_s:s\in S]$ which is a non-trivial homogeneous polynomial of degree $|G|$ on $|S|$ variables. As $|\Bbbk^*| \geq 2n$, Lemma \ref{lem:large_enough} ensures the existence of non-zero pairwise distinct values $k_s\in\Bbbk^*$, $s\in S$, satisfying that $P_S(X_s=k_s : s\in S)\ne 0$.  With that in mind, let $f\colon G\to \Bbbk$ be defined by
$$
f(g)=\begin{cases}
			k_g, & \text{if $g\in S$}\\
            0, & \text{otherwise}
		 \end{cases}
$$
Notice that since all the values $k_s\in\Bbbk^*$, for $s\in S$, are non-zero and pairwise distinct, $S=\supp(f)$ and $f|_S$ is injective. Finally,
\begin{equation*}
\begin{split}
\det\big(M_B(\Cay(f))\big) & = P(X_g=f(g):g\in G)\\
& = P_S(X_s=k_s:s\in S) \\
& \ne 0
\end{split}
\end{equation*}
hence  $\Cay(f)$ is regular.
\end{proof}

Combining the results previously obtained, we give the proof of our main result.

\begin{proof}[Proof of Theorem \ref{thm:main}]
Let $S$ be a set of generators of $G$ containing coprime order elements. Observe that $S$ is also a set of generators of $\Bbbk[G] $ as an algebra, and such $S$ always exists,  it suffices 
to consider a generating set $S$ containing $1\in S$, for example $S = G$. Considering the function $f:G\rightarrow \Bbbk$ given in Theorem \ref{th:ultimo}, $\Cay(f)$ is regular, $S=\supp(f)$ and $f|_S$ is injective. Moreover, $S$ contains coprime order elements, so by Proposition \ref{th:coprime} we  conclude that $\Aut(\Cay(f))\cong G$.  Moreover, since $S$ generates $\Bbbk[G]$ and $\Cay(f)$ is regular, by Proposition \ref{prop:Cay simple_algebra} we obtain that $\Cay(f)$ is simple. 

We now check that the simple evolution $\Bbbk$-algebra $X =\Cay(f) $ above is indeed absolutely simple, and $\Aut(X_{\mathbb F})\cong G$ for every  field extension $\mathbb{F}/\Bbbk$. In fact, since the function $f_{\mathbb F}\colon G\to {\mathbb F}$ induced by $f$ also satisfies conclusions \eqref{th:ultimo_1},   \eqref{th:ultimo_2}, and  \eqref{th:ultimo_3} in Theorem \ref{th:ultimo},  $\Aut(\Cay(f_{\mathbb F}))\cong G$, and $\Cay(f_{\mathbb F})$ is simple. Finally,  since ${\mathbb F}[G]=\Bbbk[G]\otimes_\Bbbk {\mathbb F}$ then  $\Cay(f_{\mathbb F})=\Cay(f)\otimes_\Bbbk {\mathbb F}=X_{\mathbb F}$, thus  $X_{\mathbb F}$ is simple and $\Aut(X_{\mathbb F})\cong G$.

\end{proof}

We end this paper with some remarks on the non-uniqueness of the simple evolution algebras realizing finite groups.

\begin{remark}\label{rmk:not_unique}
Our construction of the simple evolution $\Bbbk$-algebra $\Cay(f) $ such that $\Aut(\Cay(f))\cong G$, for a fixed finite group $G$ involves several choices: first,  a generator set $S\subset G$ containing coprime order elements, which is not unique if $|G|>1$. Secondly,  a function $f\colon G\to \Bbbk$ such that $\Cay(f)$ is regular, as in the proof of Theorem \ref{th:ultimo}, which is not is unique either since any non-zero scalar multiple of $f\colon G\to \Bbbk$  will also work.  An easy argument on the edge-coloured directed graphs $\Gamma^w\big(\Cay(f),B\big)=\Cay^{cor}(G,S)$ illustrates that both choices lead to non-isomorphic evolution $\Bbbk$-algebras with the same group of automorphism $G$.
\end{remark}

\begin{remark}\label{rmk:bounds}
For any evolution algebra $\Cay(f)$ built upon the group algebra $\A=\Bbbk[G]$ with fixed natural basis $B=G$, and any given $f\colon B\to \Bbbk$,  the following  holds $$G\leq\Aut(\Cay(f))\leq \rm{GL}(\Bbbk, |G|)$$ since $B=G$ is a  group of units basis in  $\A=\Bbbk[G]$ (see Proposition \ref{prop:faith}). The lower bound  $G= \Aut(\Cay(f))$ is obtained when $f$ is as in the proof of Theorem \ref{thm:main} (see also Theorem \ref{th:ultimo}) while the upper bound  $\Aut(\Cay(f))=\rm{GL}(\Bbbk, |G|)$ is obtained when $f= 0$.   Although it is not the purpose of this paper, we could raise  the question of  which  intermediate groups  $H$,   $$G\leq H\leq \rm{GL}(\Bbbk, |G|),$$  can be realized as automorphisms of simple Cayley evolution algebras. That is,   $\Aut(\Cay(f))= H$ for  some function $f\colon G\to \Bbbk$.
\end{remark}
We now highlight that not only $\Aut(\Cay(f))$ depends on how \emph{good} or \emph{bad} the function $f: G \rightarrow \Bbbk$ is, as Remark \ref{rmk:bounds} illustrates, but it also depends on the group of units of $\A=\Bbbk[G]$.  And in fact,  since the isomorphism problem for group rings \cite[Problem 1.1]{Sandling} has a negative answer in general \cite{Hert},  the group of units basis is not unique in a group algebra.  The following example shows that the choice of the group of units basis as natural basis for  $\Cay(f)$ might lead to non-isomorphic Cayley evolution algebras with non-isomorphic group of automorphisms:

\begin{example}\label{ex:hertwek}
Let $G_1$ and $G_2$ be the finite groups of order $n=2^{21}97^{28}$ denoted by  respectively $X$ and $Y$ in \cite[Theorem B]{Hert}. Let $\Bbbk$ be a field (not necessarily finite) with multiplicative group of order $|\Bbbk^*| \geq 2 \cdot 2^{21}97^{28}$ and let
 $\A= \Bbbk[G_1]$ be the group $\Bbbk$-algebra.  By \cite[Theorem B]{Hert},  although $G_1$ and $G_2$  are non-isomorphic, they can both be the group of units basis of $\A$. Hence, following the proof of Theorem \ref{thm:main},  there exist functions $f_1\colon G_1\to \Bbbk$,  and $f_2\colon G_2\to \Bbbk$, such that $\Cay(f_1)$ and  $\Cay(f_2)$ are simple evolution $\Bbbk$-algebras with  $\Aut(\Cay(f_1))\cong G_1 \not \cong G_2 \cong \Aut(\Cay(f_2))$, and so $\Cay(f_1) \not \cong \Cay(f_2)$.  
\end{example}

Finally, it is also natural to ask whether the function $f\colon G\to \Bbbk$ defining $\Cay(f)$ can be chosen to be a significant one, for example a character, or more generally, a  class function, that is $f(g^{-1}kg)=f(k)$ for all $g,k\in G$. We prove the following:

\begin{proposition}\label{prop:class_function}
Let  $G$ be a finite group, $\A=\Bbbk[G]$, and $f\colon G\to \Bbbk$ be a class function. Then $\Aut(\Cay(f))$ contains two subgroups $K_i$, $i=1,2$, such that $K_1\cong G$, $K_2\cong G/Z(G)$ and $K_1\cap K_2=\{1\}$.
\end{proposition}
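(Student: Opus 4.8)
The plan is to take $K_1$ to be the subgroup of left-multiplications already produced in Proposition \ref{prop:faith}, and to take $K_2$ to be a subgroup of conjugations, whose realisation as genuine automorphisms is precisely where the class-function hypothesis is used. Recall from Proposition \ref{prop:faith} that $G\cong K_1:=\{\psi_g : g\in G\}\le\Aut(\Cay(f))$, where $\psi_g(a)=ga$ and the products are taken in $\A=\Bbbk[G]$. For the second subgroup I would introduce the conjugation maps $\phi_g\colon\Cay(f)\to\Cay(f)$, $\phi_g(a)=gag^{-1}$. Each $\phi_g$ is $\Bbbk$-linear and permutes the natural basis $B=G$ bijectively, hence is a linear automorphism; the content is to check that it respects the evolution product. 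For distinct $h\ne h'$ this is immediate, since $\phi_g(h\cdot h')=\phi_g(0)=0$ while $\phi_g(h)\cdot\phi_g(h')=(ghg^{-1})\cdot(gh'g^{-1})=0$ because $ghg^{-1}\ne gh'g^{-1}$ are distinct basis vectors.

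The crucial step is the diagonal identity, which I expect to be the main (and essentially the only) obstacle. Using $h\cdot h=\sum_{k\in G}f(h^{-1}k)k$ and reindexing by $m=gkg^{-1}$, I obtain
$$\phi_g(h\cdot h)=\sum_{k\in G}f(h^{-1}k)\,gkg^{-1}=\sum_{m\in G}f(h^{-1}g^{-1}mg)\,m,$$
whereas $\phi_g(h)\cdot\phi_g(h)=(ghg^{-1})\cdot(ghg^{-1})=\sum_{m\in G}f(gh^{-1}g^{-1}m)\,m$. These coincide provided $f(h^{-1}g^{-1}mg)=f(gh^{-1}g^{-1}m)$ for all $m\in G$, which is exactly the class-function property applied to conjugation by $g$, namely $f(h^{-1}g^{-1}mg)=f\big(g(h^{-1}g^{-1}mg)g^{-1}\big)=f(gh^{-1}g^{-1}m)$. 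Hence $\phi_g\in\Aut(\Cay(f))$, and the invariance of $f$ under conjugation is what makes this work.

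It remains to identify $K_2$ and to compute the intersection. The assignment $g\mapsto\phi_g$ is a group homomorphism since $\phi_g\circ\phi_{g'}=\phi_{gg'}$, and its kernel is exactly $Z(G)$, because $\phi_g=\id$ if and only if $g$ centralises every element of $G$. Thus $K_2:=\{\phi_g:g\in G\}\cong G/Z(G)$. To see that $K_1\cap K_2=\{1\}$, I would evaluate any equality $\psi_g=\phi_h$ at the identity basis vector $1\in G$: the left side gives $\psi_g(1)=g$, while the right side gives $\phi_h(1)=h1h^{-1}=1$, forcing $g=1$ and hence $\psi_g=\id$. Therefore the only automorphism lying in both subgroups is the identity, which yields $K_1\cong G$, $K_2\cong G/Z(G)$, and $K_1\cap K_2=\{1\}$, completing the proof.
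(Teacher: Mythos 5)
Your proposal is correct and follows essentially the same route as the paper: $K_1$ is the left-multiplication subgroup from Proposition \ref{prop:faith}, $K_2$ is the image of the conjugation homomorphism $g\mapsto\phi_g$ with kernel $Z(G)$, and the class-function hypothesis enters exactly where you place it, in the diagonal identity $\phi_g(h\cdot h)=\phi_g(h)\cdot\phi_g(h)$. The only cosmetic difference is the intersection step, where you evaluate at the basis vector $1\in G$ while the paper combines the freeness of the $K_1$-action on $B$ (Proposition \ref{prop:units_vs_aut}) with the observation that $\phi_h$ fixes $h$; both arguments are equally valid.
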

\begin{proof}
By Proposition \ref{prop:faith},  there exists $K_1\cong G\leq \Aut(\Cay(f))$. Moreover, since the natural basis $B=G$ of $\Cay(f)$ consists of units in $\A=\Bbbk[G]$, Proposition \ref{prop:units_vs_aut} ensures that $K_1$ acts freely on the natural basis of $\Cay(f)$.

We now consider the following morphism $\rho\colon G\to \Aut(\Cay(f))$: for any $h\in G$, let $\rho(h)$ be the linear automorphism of $\Cay(f)=\Bbbk[G]$ given by inner conjugation in $G$, that is, $\rho(h)(g)=hgh^{-1}$. 
Observe that $\rho(h)$ maps the natural basis $B=G$ of $\Cay(f)$ to itself, hence if $g \neq g'$ thus $g\cdot g'=0$, then  $\rho(h)(g) \neq \rho(h)(g')$ and  $\rho(h)(g)\cdot\rho(h)(g')=0$. Moreover
\begin{equation*}
\begin{split}
\rho(h)(g\cdot g) &=\rho(h)\Big(\sum_{k\in G}f(g^{-1}k)k\Big)\\
& =\sum_{k\in G}f(g^{-1}k)\rho(h)(k)\\
& =\sum_{k\in G}f(g^{-1}k) hkh^{-1}\\
& =\sum_{k\in G}f(hg^{-1}kh^{-1}) hkh^{-1}\text{ (since $f$ is a class function)}\\ 
& =\sum_{k\in G}f(hg^{-1}h^{-1}hkh^{-1}) hkh^{-1}\\ 
& =\sum_{k\in G}f(hg^{-1}h^{-1}k) k\\
& =\sum_{k\in G}f(\rho(h)(g)^{-1}k) k\\
& = \rho(h)(g)\cdot\rho(h)(g).
\end{split}
\end{equation*}
Therefore, $\rho(h)$ is an actual automorphism of $\Cay(f)$ for every $h\in G$ and by construction $\rho(h)=\rm{Id}$ if and only if $h\in Z(G)$. Hence $\ker(\rho)=Z(G)$ and $K_2=\Im(\rho)\cong G/Z(G)$.

Finally, notice that every $\rho(h)\in K_2$ fixes, at least, the element $h$ in $B$, while every non-trivial element in $K_1$  acts freely on $B$. Therefore  $K_1\cap K_2=\{1\}$.
\end{proof}

We obtain the following corollary:

\begin{corollary}
Let $\A=\Bbbk[G]$, and $f\colon G\to \Bbbk$ be a class function satisfying that $\Aut(\Cay(f))\cong G$. Then $G$ is abelian.
\end{corollary}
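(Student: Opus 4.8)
The plan is to combine the two subgroups produced in Proposition \ref{prop:class_function} with the hypothesis $\Aut(\Cay(f))\cong G$ and a simple counting argument. By that proposition, since $f$ is a class function, $\Aut(\Cay(f))$ contains subgroups $K_1\cong G$ and $K_2\cong G/Z(G)$ with $K_1\cap K_2=\{1\}$. The key observation is that $K_1$ already exhausts the whole automorphism group: as $|K_1|=|G|=|\Aut(\Cay(f))|$, we have $K_1=\Aut(\Cay(f))$. Consequently $K_2\leq K_1$, and combined with $K_1\cap K_2=\{1\}$ this forces $K_2=\{1\}$.

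From $K_2=\{1\}$ and $K_2\cong G/Z(G)$ I would immediately conclude $G/Z(G)$ is trivial, i.e. $G=Z(G)$, which is precisely the statement that $G$ is abelian. So the proof is essentially a three-line order-counting argument once Proposition \ref{prop:class_function} is in hand.

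Concretely, I would write: by Proposition \ref{prop:class_function}, $\Aut(\Cay(f))$ contains $K_1\cong G$ and $K_2\cong G/Z(G)$ with $K_1\cap K_2=\{1\}$. Since $|\Aut(\Cay(f))|=|G|=|K_1|$ and $K_1\leq \Aut(\Cay(f))$, we obtain $K_1=\Aut(\Cay(f))$; hence $K_2\leq K_1$, so $K_2=K_1\cap K_2=\{1\}$. Therefore $G/Z(G)\cong K_2$ is trivial, which means $G=Z(G)$ and $G$ is abelian.

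I do not anticipate any genuine obstacle here, since all the substantive work has been front-loaded into Proposition \ref{prop:class_function}. The only point requiring a moment's care is the finiteness of $\Aut(\Cay(f))$ used implicitly in the order comparison; this is guaranteed by the isomorphism $\Aut(\Cay(f))\cong G$ with $G$ finite, so the cardinalities $|K_1|$ and $|\Aut(\Cay(f))|$ are finite and equal, legitimizing the conclusion $K_1=\Aut(\Cay(f))$. Everything else is a formal deduction.
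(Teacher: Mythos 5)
Your proof is correct and follows essentially the same route as the paper: the paper's own proof is a one-line appeal to Proposition \ref{prop:class_function} together with $\Aut(\Cay(f))\cong G$, and your order-counting argument ($K_1=\Aut(\Cay(f))$ by finiteness, hence $K_2=K_1\cap K_2=\{1\}$, so $G/Z(G)$ is trivial) is precisely the reasoning the paper leaves implicit.
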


\begin{proof}
According to Proposition \ref{prop:class_function}, as $\Aut(\Cay(f))\cong G$, the subgroup $G/Z(G)$ must be trivial.

\end{proof}

\subsection*{Funding}

\small{This work was partially supported by MCIN/AEI/10.13039/501100011033 
[PID2020-115155GB-I00 and TED2021-131201B-I00 to C.C., PID2020-118452GB-I00 to V.M., 
PID2019-104236GB-I00 to A.T, and PID2020-118753GB-I00 to A.V.], 
and by Junta de Andaluc\'{\i}a [UMA18-FEDERJA-119,  FQM-336  to A.T., and PROYEXCEL-00827, FQM-213 to A.V.].}


\bibliographystyle{abbrv}
\bibliography{CMTV}
\end{document}